\documentclass[11pt,reqno]{amsart}
\usepackage[T1]{fontenc}
\usepackage{amsmath,amssymb,amsthm}
\usepackage[margin=1.15in]{geometry}

\theoremstyle{plain}
\newtheorem{theorem}{Theorem}
\newtheorem{proposition}[theorem]{Proposition}
\newtheorem{lemma}[theorem]{Lemma}
\newtheorem{corollary}[theorem]{Corollary}
\theoremstyle{definition}
\newtheorem{remark}[theorem]{Remark}

\newcommand{\ZZ}{\mathbb{Z}}
\newcommand{\RR}{\mathbb{R}}
\newcommand{\CC}{\mathbb{C}}
\newcommand{\tZ}{t_{\ZZ}}
\DeclareMathOperator{\Res}{Res}

\begin{document}

\title[New bounds for the integer Chebyshev constant of {$[0,1]$}]
{New bounds for the integer Chebyshev constant of $[0,1]$}

\author{David Niedbala Giraudin}
\address{Independent researcher, Meaux, France}
\thanks{ORCID: 0009-0009-1526-1178.}
\subjclass[2020]{Primary 11C08; Secondary 11Y40, 30C10, 90C34}
\keywords{Integer Chebyshev constant, integer transfinite diameter, auxiliary
functions, semi-infinite linear programming, potential theory}

\begin{abstract}
The integer Chebyshev constant of $[0,1]$ has been bracketed by
$0.4213\le \tZ([0,1])\le 0.422685$ since the work of Pritsker (2005) and of
Flammang (2014). We prove
\[
  0.4222286000\ \le\ \tZ([0,1])\ \le\ 0.4226846976 ,
\]
narrowing the interval from $1.39\cdot10^{-3}$ to $4.56\cdot10^{-4}$. Both
bounds are established by explicit certificates. The upper bound is carried by
an integer polynomial with $92$ factors and explicit integer exponents; the
lower bound by a discrete measure on $947$ cells together with a library of
$206$ irreducible polynomials, through an inequality in which the terms paying
for divisibility are made explicit. Two self-contained scripts, included as
ancillary files, re-derive both bounds from the certificates alone.
\end{abstract}

\maketitle

\section{Introduction}

For a compact $I\subset\RR$ let $\ZZ_n[x]$ denote the integer polynomials of
degree at most $n$ and
\[
  \tZ(I)\ :=\ \lim_{n\to\infty}\ \Bigl(\inf_{0\ne P\in\ZZ_n[x]}\ \|P\|_I\Bigr)^{1/n},
\]
the \emph{integer Chebyshev constant}, or integer transfinite diameter, of $I$.
The limit exists by submultiplicativity, and by Fekete's argument it equals the
infimum, so that a single explicit polynomial already bounds $\tZ(I)$ from
above.

The case $I=[0,1]$ is the most studied one, in part because of the observation
of Gelfond and Schnirelman that the value $1/e=0.3678\ldots$ would yield an
elementary proof of the prime number theorem; see the discussion in
\cite{P05}. The successive upper bounds are those of Borwein and
Erd\'elyi \cite{BE}, Habsieger and Salvy \cite{HS}, Pritsker \cite{P05},
Flammang \cite{F09}, Meichsner \cite{Me} and Flammang \cite{F14}; the lower
bound is due to Pritsker \cite{P05}. The resulting bracket,
\begin{equation}\label{eq:published}
  0.4213\ \le\ \tZ([0,1])\ \le\ 0.422685 ,
\end{equation}
is the one quoted as the state of the art in the survey \cite{F19} and has not
moved since.

\begin{theorem}\label{thm:upper}
$\tZ([0,1])\le 0.422684697505$.
\end{theorem}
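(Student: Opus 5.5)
Since $\tZ([0,1])$ equals the infimum over $n$ of $(\inf_P\|P\|_{[0,1]})^{1/n}$ (Fekete, as recalled in the introduction), it suffices to exhibit one explicit polynomial. Concretely, I will take a product
\[
  P(x)\ =\ \prod_{j=1}^{92} Q_j(x)^{a_j},\qquad a_j\in\ZZ_{\ge 0},
\]
with $Q_1=x$, $Q_2=1-x$, $Q_3=2x-1$, and the remaining $Q_j$ irreducible integer polynomials whose roots lie in $(0,1)$. Such factors are typically found in Flammang's and Pritsker's lists and their extensions: $x(1-x)$, $5x^2-5x+1$, $6x^2-6x+1$, $29x^4-58x^3+40x^2-11x+1$, and so on. I will then check that $\|P\|_{[0,1]}^{1/\deg P}\le 0.422684697505$.

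To make this finite and rigorous, I will normalize to the quantity
\[
  \Phi(x)\ =\ \sum_j \alpha_j\log|Q_j(x)|,\qquad \alpha_j=a_j/N,\quad N=\sum_j a_j\deg Q_j .
\]
Then $\log\|P\|^{1/N}=\sup_{[0,1]}\Phi$, and the goal becomes $\sup_{[0,1]}\Phi\le\log 0.422684697505$.

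The construction has two stages.
\begin{enumerate}
\item \textbf{Real weights.} Solve the semi-infinite linear program of minimizing $t$ subject to $\Phi(x)\le t$ on a fine grid, with $\alpha_j\ge0$ and $\sum_j\alpha_j\deg Q_j=1$. Iterate by adding grid points near the current maxima, in Remez exchange style. The library of factors is enlarged by searching for integer polynomials of small norm on $[0,1]$ whose roots lie near the points where $\Phi$ attains its maximum. This is the usual LLL-type or exhaustive search used by Habsieger--Salvy and Flammang. I stop once the LP value is below the target with a margin of about $10^{-12}$ in $t$.
\item \textbf{Integer exponents.} Choose a large $N$ and round $N\alpha_j$ to integers $a_j$. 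Since $\log|Q_j|$ has logarithmic singularities, rounding perturbs $\Phi$ only by $O(\max_j|\alpha_j-a_j/N|\cdot\log\text{-terms})$ away from the roots. Near the roots, $\Phi\to-\infty$ anyway.
\end{enumerate}

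Certification of $\sup_{[0,1]}\Phi\le t_0$ for the final integer-exponent $P$ is done as follows. By the symmetry $x\mapsto 1-x$, which I can impose on the library, it suffices to treat $[0,1/2]$. Partition this interval into subintervals, and on each one bound $\Phi$ above using interval arithmetic on the $\log|Q_j|$. Equivalently, on each piece bound $\Phi$ by its value at a point plus a Lipschitz or derivative estimate via $\Phi'=\sum_j\alpha_j Q_j'/Q_j$. Near each root of some $Q_j$, the corresponding term tends to $-\infty$ and dominates, so there I use the crude bound $\log|Q_j(x)|\le\log(|x-r|\cdot C)$ and exclude a small neighbourhood. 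All evaluations are carried out with rational or certified multiprecision arithmetic, which is what the ancillary script would redo.

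The main obstacle is stage 1: obtaining a factor library good enough to push the LP value below $0.4226846975$, since Flammang's bound $0.422685$ leaves only about $3\cdot10^{-7}$ of room. The rigorous verification is routine but delicate near the many near-tangency maxima of $\Phi$, where the margin is tiny. There I would refine subintervals adaptively and use second-order Taylor bounds rather than plain Lipschitz estimates.
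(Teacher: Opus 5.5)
Your architecture is the paper's. It is a semi-infinite linear program over a finite library of integer factors, with weights rationalised into integer exponents, followed by a rigorous subdivision bound for $\sup\frac1N\log|P|$ on the final integer-exponent polynomial. Your symmetric reduction to $[0,1/2]$ is equivalent to the paper's passage to $w=x(1-x)$ on $[0,1/4]$ (Lemma~\ref{lem:sym}).

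The gap is that for a statement of this kind the certificate \emph{is} the proof, and you defer it to an open-ended search that you yourself call the main obstacle. Nothing in your plan ensures that an LLL-type or exhaustive search will bring the program below $0.422684697505$, which is only $3\cdot10^{-7}$ under the previous record. Stage~1 as you state it, stopping once the LP value is below the target, simply does not terminate if the search fails. As written, no polynomial is exhibited and no bound is proved.

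What the paper supplies, and your plan lacks, is a concrete source for the library. Flammang's factors in \cite{F14} come from auxiliary functions for the absolute length on $x>0$, not from products on $[0,1]$. The substitution $y=x/(x+1)$, $q_j(y)=(1-y)^{\deg Q_j}Q_j\bigl(y/(1-y)\bigr)$, together with a column $\log(1-y)$ of weight $1-D$ (Proposition~\ref{prop:transport}), makes them columns of \eqref{eq:lp}. Re-optimising the weights over this transported library alone already gives $0.422684708815<0.422685$. The paper then pools it with the $23$ factors of \cite{HH}, members of the Gorshkov--Wirsing tower from \cite{H11}, and three new factors of $w$-degree $7$. Rounding at $N=10^{10}$, with $w$-degree exactly $N$, gives the $92$-factor certificate.

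Its verification is also simpler than what you sketch. On $[t-H,t+H]$ the paper uses $\log|R(w)|\le\log\bigl(|a|\prod_\rho(|t-\rho|+H)\bigr)$ (Lemma~\ref{lem:mono}) on $60$-digit approximate roots. The approximation error is measured as a coefficient residual $E_j$ and added inside the logarithm. So no derivative, Lipschitz or Taylor bounds are needed, and no neighbourhoods of roots have to be excluded. Working in factored form also avoids the dependency overestimate of coefficient-based interval evaluation, which is severe for factors with large coefficients and small values on $[0,1/4]$. Bisection to $28$ levels ($267\,245$ intervals) then certifies $\max_{[0,1/4]}f\le-1.7222575477025739$.
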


\begin{theorem}\label{thm:lower}
$\tZ([0,1])\ge 0.422228600019$.
\end{theorem}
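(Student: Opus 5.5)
We prove Theorem~\ref{thm:lower} by the auxiliary-measure (Gelfond--Schnirelman--Pritsker) method. Fix a finite library $Q_1,\dots,Q_m\in\ZZ[x]$ of irreducible polynomials with all roots in $(0,1)$, and a probability measure $\mu$ on $[0,1]$. Let $P\in\ZZ_n[x]$, $P\neq 0$, and factor $P=\prod_j Q_j^{a_j}\cdot R$ with $R$ coprime to every $Q_j$. Then
\[
\log\|P\|_{[0,1]}\ \ge\ \int\log|P|\,d\mu
 \;=\;\sum_j a_j\int\log|Q_j|\,d\mu+\int\log|R|\,d\mu .
\]
To bound the last term, write $R=c\prod_i(x-\beta_i)$ and use the resultant inequality $|\Res(R,Q)|\ge1$ for every integer $Q$ coprime to $R$. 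Together with $|\mathrm{disc}|$-type integrality for $R$ itself, this gives a lower bound on $\int\log|R|\,d\mu$ in terms of the energy of $\mu$, via the potential $U^\mu(z)=\int\log|z-t|\,d\mu(t)$. Concretely, we aim for an inequality of the form
\[
\int\log|R|\,d\mu\ \ge\ \deg R\cdot\Bigl(\log|c|^{1/\deg R}+\min_{z\in\CC}\bigl[U^\mu(z)-\textstyle\sum_j c_j\log|Q_j(z)|\bigr]\Bigr)+\sum_j c_j\log|\Res(R,Q_j)| .
\]
The resultant terms are at least $0$ since $R$ is coprime to each $Q_j$. The weights $c_j\ge0$ are chosen together with $\mu$.

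With this inequality in hand, each library factor costs $\int\log|Q_j|\,d\mu$ per unit of its degree, while the generic part costs the constant $\lambda:=\min_z[\dots]$. These are the terms "paying for divisibility". One obtains
\[
\frac1n\log\|P\|\ \ge\ \min\Bigl(\lambda,\ \min_j\frac{\int\log|Q_j|\,d\mu-\sum_k c_k\log|\Res(Q_j,Q_k)|_{\text{corr}}}{\deg Q_j}\Bigr).
\]
It suffices to certify that every quantity in the outer min is at least $\log 0.422228600019$.

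The certificate is a discrete measure $\mu=\sum_{\ell=1}^{947} w_\ell\,\delta$ on cells. Each cell is either an atom or a uniform density on a small interval, so that $U^\mu$ is explicitly computable, with the $c_j$ attached to the $206$ library polynomials. Finding $\mu$ and the $c_j$ is a semi-infinite linear program: we maximise $\lambda$ subject to $U^\mu(z)-\sum c_j\log|Q_j(z)|\ge\lambda$ for all $z$, plus the finitely many library constraints. We solve it numerically on a fine grid, enlarging the library by adding the integer polynomials that are the current worst "violators". Such a violator is found by LLL or by a search over small-height polynomials whose roots cluster where the constraint is tight.

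The main obstacle is the rigorous verification of the continuum constraint $\min_z[\dots]\ge\lambda$. By symmetry, and since the expression is subharmonic off the supports, the minimum can be reduced to real $z\in[0,1]$. There the function is singular at the roots of the $Q_j$, where $-c_j\log|Q_j|\to+\infty$, which helps, and it is merely continuous at cell endpoints. We would certify it by interval arithmetic. We partition $[0,1]$ into subintervals and bound the derivative of $U^\mu$ on each piece using closed-form antiderivatives of $\log|z-t|$. Near the roots we use the monotone singular term to absorb error. The subintervals are refined adaptively until the lower bound exceeds $\log 0.4222286$ with the margin left by the LP, which is presumably about $10^{-11}$ given the stated digits. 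The finitely many library inequalities are checked exactly, computing the resultants as integers.
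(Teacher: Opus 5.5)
Your framework is the paper's Theorem~\ref{thm:cert}: test $\log\|P\|_{[0,1]}$ against $\int\log|P|\,d\mu$, pay for non-library factors with $|\Res|\ge1$, certify $\inf A$ for $A=U_\mu-\sum_jc_j\log|Q_j|$ by bisection, and treat library factors separately. But your step reducing the continuum constraint to $[0,1]$ is unsound as stated, and your displayed bound for $\int\log|R|\,d\mu$ hides the reason. Since $\log|\Res(R,Q_j)|=d_j\log|c|+\sum_i\log|Q_j(\beta_i)|$, what one actually gets is
\[
\int\log|R|\,d\mu\ \ge\ (1-B)\log|c|+\deg R\cdot\inf_{\CC}A+\sum_jc_j\log|\Res(R,Q_j)|,\qquad B:=\sum_jc_j\deg Q_j .
\]
So the coefficient of $\log|c|$ is $1-B$, not $1$. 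Your version already fails for $\mu$ Lebesgue measure, library $\{x\}$, $c_1=1$, $R=2x-1$: the left side is $-1$, while your right side is $\log2-1.278\ldots$. Discarding this term needs the budget constraint $B\le1$, which you never impose.

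This matters because you then certify only $\min_{[0,1]}A$. Passing from $\CC$ to $[0,1]$ is a minimum principle on $\hat{\CC}\setminus[0,1]$, where $A$ is superharmonic (not subharmonic) provided $B\le1$. The point $\infty$ belongs to that domain, with $A(z)=(1-B)\log|z|-\sum_jc_j\log\mathrm{lead}(Q_j)+o(1)$.
\begin{itemize}
\item If $B>1$, then $\inf_{\CC}A=-\infty$, while $\min_{[0,1]}A$ can only increase with the $c_j$ wherever $|Q_j|\le1$, since $\partial A/\partial c_j=-\log|Q_j|$. Take library $\{x,1-x\}$, large equal weights, and $\mu$ uniform on a short interval around $\tfrac12$. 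Your scheme would then ``certify'' $\tZ([0,1])\ge\tfrac12-\varepsilon$, contradicting Theorem~\ref{thm:upper}.
\item If $B=1$, you must also check $-\sum_jc_j\log\mathrm{lead}(Q_j)\ge\lambda$.
\end{itemize}
This is exactly the role of the hypothesis $B\le1$ and the middle term $-\sum_jc_j\log a_j$ in Theorem~\ref{thm:cert}. The paper's proof checks $B=0.999999999$ and $A(\infty)=-0.688\ldots\ge V$ alongside $\inf_{[0,1]}A\ge V$.

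Smaller points:
\begin{itemize}
\item Atoms are not admissible cells. An atom at $t_0$ gives $U_\mu(t_0)=-\infty$, hence $\inf A=-\infty$ unless a sufficiently weighted library root at $t_0$ cancels it. The paper's $\mu$ is a piecewise-constant density, so $U_\mu$ is finite and continuous.
\item For library factors, your own decomposition already gives the exact cost $\frac1{d_j}\int\log|Q_j|\,d\mu$ per unit degree. Your unexplained ``corrected'' resultant subtraction, read as a sum over $k\ne j$, is literally the paper's $G_j=\frac1{d_j}\bigl(\int\log|Q_j|\,d\mu-\sum_{k\ne j}c_k\log|\Res(Q_j,Q_k)|\bigr)$. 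It is valid but never exceeds the exact cost. No discriminant or energy term enters anywhere.
\item Derivative bounds on $U_\mu$ degrade at cell edges, since $\psi_k'(x)=(\log|x-a|-\log|x-b|)/h_k$ makes $U_\mu'$ logarithmically singular wherever the density jumps. The paper uses no derivatives. It relies on the lossless unimodality of each $\psi_k$ with minimum at the cell midpoint (Lemma~\ref{lem:uni}), together with Lemma~\ref{lem:mono} and measured coefficient residuals for the $\log|Q_j|$.
\end{itemize}
The numerical content of the theorem is the shipped certificate and its run ($V=-0.862208405409$ over $12\,326\,733$ intervals, $\min_iG_i\ge V$ with margin $1.23\cdot10^{-7}$), which your plan leaves to be computed.
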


\begin{corollary}\label{cor:bracket}
$0.4222286000\ \le\ \tZ([0,1])\ \le\ 0.4226846976$. The width of the bracket
drops from $1.39\cdot10^{-3}$ to $4.56\cdot10^{-4}$.
\end{corollary}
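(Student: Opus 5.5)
The corollary follows from Theorems~\ref{thm:upper} and~\ref{thm:lower}. Only two things need checking: that the rounded decimals are still valid bounds, and that the width figures come out as stated.

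\emph{Rounding.} For the lower end I use Theorem~\ref{thm:lower}, which gives $\tZ([0,1])\ge 0.422228600019$. Since $0.422228600019\ge 0.4222286000$, truncating to ten decimals keeps the inequality valid. For the upper end I use Theorem~\ref{thm:upper}, which gives $\tZ([0,1])\le 0.422684697505$. Since $0.422684697505\le 0.4226846976$, rounding the tenth decimal up keeps that inequality valid as well. Both comparisons are exact statements about terminating decimals, so no further estimate is involved. Together they give the displayed bracket $0.4222286000\le \tZ([0,1])\le 0.4226846976$.

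\emph{Widths.} The old bracket \eqref{eq:published} has width
\[
0.422685-0.4213=0.001385\approx 1.39\cdot10^{-3}.
\]
The new bracket has width
\[
0.4226846976-0.4222286000=0.0004560976\approx 4.56\cdot10^{-4}.
\]
Both figures agree with the statement after rounding to three significant digits.

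There is no genuine obstacle here. All the mathematical weight lies in Theorems~\ref{thm:upper} and~\ref{thm:lower}, which are assumed. The one point that needs care is the direction of rounding: the lower bound must be rounded down and the upper bound rounded up. That is exactly what the stated ten-digit values do.
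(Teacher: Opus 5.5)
Your proposal is correct and matches the paper, which states the corollary as an immediate consequence of Theorems~\ref{thm:upper} and~\ref{thm:lower} with the lower bound rounded down and the upper bound rounded up. Your width computations ($0.001385$ for \eqref{eq:published} and $0.0004560976$ for the new bracket) are exactly what the stated figures $1.39\cdot10^{-3}$ and $4.56\cdot10^{-4}$ come from.
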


Theorem \ref{thm:lower} improves the lower bound of \cite{P05} by
$9.3\cdot10^{-4}$. Theorem \ref{thm:upper} improves the upper bound of
\cite{F14} by $3.0\cdot10^{-7}$; the margin is thin, and the claim rests
entirely on a certificate that a third party checks in seconds. Both
certificates are shipped with the paper.

Independently, Qu and Yang \cite{QY} proved a conjecture of Montgomery:
$\tZ([0,1])=s^{-1}$, where $s$ is the liminf, as $n\to\infty$, of the
infimum of $|c_n|^{1/n}$ over irreducible $P\in\ZZ[x]$ of degree $n$ with all
roots in $[0,1]$, $c_n$ denoting the leading coefficient. Their Theorem~H
turns an inequality of Pritsker \cite{P05} into an equality, by way of the
essential minimum of a height. Their argument is structural and produces no
numerical bound; it is therefore complementary to the certificates
established here.

Section \ref{sec:up} treats the upper bound: the semi-infinite linear program,
the exact reduction to the variable $w=x(1-x)$, and the bisection scheme that
proves the supremum bound. Section \ref{sec:low} states and proves the
certificate inequality behind Theorem \ref{thm:lower}, whose point is the
treatment of the library polynomials that divide the extremal objects.
Section \ref{sec:ceiling} records a ceiling for the method: enlarging the
family of admissible measures gains nothing. Section \ref{sec:verif} lists
what is verified and at what cost, and Section \ref{sec:scope} what is not
claimed.

\section{The upper bound}\label{sec:up}

\subsection{The linear program}
Let $Q_1,\dots,Q_k\in\ZZ[x]$ and $c_j\ge0$ with $\sum_j c_j\deg Q_j=1$. For
rational $c_j$ with common denominator $N$ the product
$P=\prod_j Q_j^{c_jN}$ is an integer polynomial of degree $N$ and
\[
  \|P\|_{[0,1]}^{1/N}\ =\ \exp\Bigl(\max_{x\in[0,1]}\ \sum_j c_j\log|Q_j(x)|\Bigr),
\]
whence
\begin{equation}\label{eq:lp}
  \tZ([0,1])\ \le\ \exp\Bigl(\ \min_{\substack{c\ge0\\ \sum_j c_j\deg Q_j=1}}\
  \max_{x\in[0,1]}\ \sum_j c_j\log|Q_j(x)|\Bigr),
\end{equation}
a linear semi-infinite program in $c$.

\subsection{The variable $w$}
All the factors that occur are invariant under $x\mapsto 1-x$. One may
therefore work in $w=x(1-x)$ on $[0,1/4]$: a polynomial $R\in\ZZ[w]$ of degree
$D$ gives $R(x(1-x))$ of $x$-degree $2D$, and
$\tZ([0,1])=\tZ([0,1/4])^{1/2}$. The following lemma shows that this is not a
restriction of the problem.

\begin{lemma}\label{lem:sym}
The infimum in \eqref{eq:lp} over all finite libraries in $\ZZ[x]$ is attained
in the limit by libraries consisting of integer polynomials in $w=x(1-x)$.
\end{lemma}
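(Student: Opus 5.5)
Let $\sigma(x)=1-x$. The plan is to symmetrize: any admissible library and weight vector can be replaced, at no cost in the objective of \eqref{eq:lp}, by one built from $\sigma$-invariant integer polynomials. Those are then rewritten in the variable $w$.

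Fix a library $Q_1,\dots,Q_k$ and weights $c\ge0$ with $\sum_j c_j\deg Q_j=1$, and set
\[
F(x)=\sum_j c_j\log|Q_j(x)|.
\]
Define $Q_j^\ast(x)=Q_j(x)\,Q_j(1-x)$. This lies in $\ZZ[x]$, is $\sigma$-invariant, and has degree at most $2\deg Q_j$. The degree is exactly $2\deg Q_j$, because the leading coefficient is $\pm a^2\neq0$, where $a$ is the leading coefficient of $Q_j$. Take weights $c_j^\ast=c_j/2$. The normalization is preserved, and the new objective is
\[
F^\ast(x)=\tfrac12\bigl(F(x)+F(1-x)\bigr).
\]
Since $\max_{[0,1]}F(1-x)=\max_{[0,1]}F$, we get $\max F^\ast\le\max F$. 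So the infimum over symmetric libraries is at most the infimum over all libraries. The reverse inequality is trivial, because symmetric libraries are a subfamily. Hence the two infima coincide, and any minimizing sequence of libraries can be replaced by a symmetric one.

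Next, every $\sigma$-invariant $S\in\ZZ[x]$ is an integer polynomial in $w=x(1-x)$. To see this, substitute $x=y+\tfrac12$. Then $S$ becomes an even polynomial in $y$, and $y^2=\tfrac14-w$, so $S\in\mathbb{Q}[w]$ with degree $\deg S/2$. Integrality needs a separate argument, which I would do by induction on the degree. $S$ has even degree $2D$, and its leading coefficient is $(-1)^D b$, where $b$ is the leading coefficient of $R(w)$. Since $w^D=(-1)^Dx^{2D}+\dots$ has integer coefficients, $b$ is an integer. Then $S-bw^D$ is again $\sigma$-invariant, lies in $\ZZ[x]$, and has lower degree, so the induction applies.

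Symmetric polynomials of odd degree cannot occur: a nonconstant odd-degree $S$ satisfies $S(1-x)$ with leading term $-\mathrm{lc}(S)x^n$, so it is not invariant. This is consistent with the degree count above. Each $Q_j^\ast$ is therefore some $R_j(x(1-x))$ with $R_j\in\ZZ[w]$ and $\deg R_j=\deg Q_j$.

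The step needing the most care is what ``attained in the limit'' means. The LP value over a fixed library need not be attained, and the infimum over finite libraries is approached only along a sequence. The symmetrization above is pointwise per pair (library, weights), so it transfers any approximating sequence to a $w$-library sequence with no worse value, and this resolves the issue. One also has to remember that $Q_j(x)Q_j(1-x)$ may factor further, but the lemma does not require irreducibility, so this does no harm. A library in $\ZZ[w]$ is thus an admissible library in $\ZZ[x]$ via $R\mapsto R(x(1-x))$. The correspondence with $t_\ZZ([0,1/4])$ uses $\max_{[0,1]}$ of a function of $w$, which equals $\max_{[0,1/4]}$, together with the doubling of degrees; that correspondence is the identity already stated before the lemma.
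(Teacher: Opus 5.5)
Your proof is correct. Its first half is the paper's argument in a slightly different form. The paper averages the weight vector $c$ with its reflection (the weight of $Q_j$ moved onto $Q_j(1-x)$) and uses that the maximum of an average is at most the average of the maxima. This gives exactly your objective $\tfrac12\bigl(F(x)+F(1-x)\bigr)$. However, the paper only concludes that the resulting \emph{product} is invariant. Your grouping into the individually invariant factors $Q_j^\ast=Q_j(x)Q_j(1-x)$ with weights $c_j/2$ produces a library in $\ZZ[w]$ directly, which is what the lemma literally asserts.

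The second half genuinely differs, and here your version is the stronger one. The paper argues that after $u=x-\tfrac12$ the invariants are ``the integer polynomials in $u^2$,'' and that the change of basis between $u^2$ and $w=\tfrac14-u^2$ is unimodular over $\ZZ$. As stated, neither claim holds, because both substitutions involve non-integral translations; already $w$ itself is not an integer polynomial in $u^2$. That route by itself only yields $S\in\ZZ[\tfrac12][w]$. Your induction on the degree peels off $(-1)^D\,\mathrm{lc}(S)\,w^D$ and uses that a nonconstant invariant polynomial has even degree. This supplies exactly the integrality step $\ZZ[x]^{\sigma}=\ZZ[w]$ that the paper's sketch leaves unjustified.
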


\begin{proof}
First, an optimal weight assignment may be taken symmetric: if $c$ is feasible
then so is its reflection $c'$, obtained by putting the weight of $Q_j$ on
$Q_j(1-x)$, with the same objective value; and the average $(c+c')/2$ is
feasible with objective at most the average of the two objectives, since the
maximum of an average is at most the average of the maxima. Such a solution
pairs $Q(x)$ and $Q(1-x)$ with equal weight, so the corresponding product is
invariant under $x\mapsto1-x$.

Second, an integer polynomial invariant under $x\mapsto1-x$ is an integer
polynomial in $w$. Indeed, put $u=x-\tfrac12$; the involution becomes
$u\mapsto-u$, so the invariants are the integer polynomials in $u^2$, and
$w=\tfrac14-u^2$, the change of basis between $u^2$ and $w$ being unimodular
over $\ZZ$ in both directions.
\end{proof}

\subsection{The certificate and its proof}
The weights are rationalised with denominator $N=10^{10}$, giving explicit
integer exponents $e_j$ and the integer polynomial
$P(w)=\prod_{j} R_j(w)^{e_j}$ of $w$-degree exactly $10^{10}$, with $92$
factors $R_j$ and $1246$ roots in all. Then $Q(x)=P(x(1-x))\in\ZZ[x]$ has
degree $2\cdot10^{10}$ and $\|Q\|_{[0,1]}=\|P\|_{[0,1/4]}$, so that
\[
  \tZ([0,1])\ \le\ \|P\|_{[0,1/4]}^{1/(2N)}\ =\ \exp\bigl(\tfrac12\max_{[0,1/4]}f\bigr),
  \qquad f(w):=\tfrac1N\log|P(w)| .
\]

It remains to bound $\max_{[0,1/4]}f$ from above, which is done by bisection
with bounds using monotonicity alone.

\begin{lemma}\label{lem:mono}
Let $R(w)=a\prod_\rho(w-\rho)$ and let $t\in\RR$, $H>0$. Then for every
$w\in[t-H,t+H]$,
\[
  \log|R(w)|\ \le\ \log\Bigl(|a|\prod_\rho\bigl(|t-\rho|+H\bigr)\Bigr).
\]
\end{lemma}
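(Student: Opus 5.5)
The plan is to bound each linear factor of $R$ separately and then multiply. Write $|R(w)|=|a|\prod_\rho|w-\rho|$, where the product runs over the roots $\rho$ of $R$ in $\CC$, counted with multiplicity. If $w$ happens to be a root, then $\log|R(w)|=-\infty$ and there is nothing to prove. So it suffices to show that each factor satisfies $|w-\rho|\le|t-\rho|+H$ for all $w\in[t-H,t+H]$. Since all quantities are nonnegative, the factor bounds can be multiplied, and taking logarithms gives the claim, because $\log$ is increasing on $(0,\infty)$.

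The factor bound is the triangle inequality in $\CC$:
\[
  |w-\rho|\ \le\ |w-t|+|t-\rho|\ \le\ H+|t-\rho| ,
\]
using $|w-t|\le H$ for $w$ in the interval. This holds for real and non-real roots alike, so complex-conjugate pairs need no separate treatment, and it does not use the sign of $a$.

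I expect no real obstacle here. The only points needing care are the following.
\begin{itemize}
\item Roots must be counted with multiplicity, which is what the product over $\rho$ means.
\item If $R$ is a nonzero constant, the product is empty and the statement reduces to $\log|a|\le\log|a|$.
\item The right-hand side is finite and positive as soon as $H>0$, so taking logarithms is legitimate.
\end{itemize}

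For the application, one applies this to each $R_j$ and sums with the weights $e_j/N$. This yields an upper bound for $f$ on each bisection cell $[t-H,t+H]$ that needs only the distances $|t-\rho|$, which is what makes the bisection certificate checkable.
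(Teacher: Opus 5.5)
Your proof is correct and is the same as the paper's: the triangle inequality gives $|w-\rho|\le|w-t|+|t-\rho|\le|t-\rho|+H$ for each root, and you then multiply the nonnegative factors and use that $\log$ is increasing. Your side remarks (multiplicities, the constant case, $w$ being a root, positivity of the right-hand side when $H>0$) only spell out details the paper's one-line proof leaves implicit.
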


\begin{proof}
$|w-\rho|\le|t-\rho|+H$ for each $\rho$, and $\log$ is increasing.
\end{proof}

No Lipschitz constant and no derivative are needed, and the bound behaves
correctly near the roots, where $\log|R|$ tends to $-\infty$.

Lemma \ref{lem:mono} is applied to approximate roots, and the resulting error
is \emph{measured}, not estimated: the roots $\tilde\rho$ of each $R_j$ are
computed to $60$ digits, the product $a_j\prod_\rho(w-\tilde\rho)$ is expanded
and compared coefficient by coefficient with $R_j$, and
$E_j:=\sum_k|\delta_k|\,(1/4)^k$ bounds the difference on $|w|\le1/4$, where
$\delta_k$ are the coefficient discrepancies. No perturbation theory of roots
is invoked. The bound of Lemma \ref{lem:mono} is then used with $E_j$ added
inside the logarithm. The largest measured $E_j$ over the $92$ factors is
$1.900\cdot10^{-47}$. A rounding allowance $\varepsilon=10^{-11}$ is added to
every bound; with $1246$ roots and $92$ logarithms per evaluation the
accumulated double-precision error is of order $10^{-15}$.

\begin{proof}[Proof of Theorem \ref{thm:upper}]
Bisection of $[0,1/4]$ with the bound above terminates with
\[
  \max_{[0,1/4]}f\ \le\ -1.7222575477025739
\]
after $267\,245$ intervals over $28$ levels, whence
\[
  \tZ([0,1])\ \le\ \exp(-1.7222575477025739/2)\ \le\ 0.422684697505 .
\]
\end{proof}

\subsection{Calibration and provenance of the library}
The linear program was calibrated against known values before being used:
the single factor $x(1-x)$, of degree $2$ and hence of weight $c=1/2$, gives
$0.5=\|x-x^2\|_{[0,1]}^{1/2}$ exactly; two
factors give $0.4472135965$; the $23$ factors listed in the appendix of
\cite{HH} give $0.4231167338$ computed in $x$ and $0.4231167191$ computed in
$w$ by a disjoint implementation, the two agreeing to $1.5\cdot10^{-8}$.

The library used for Theorem \ref{thm:upper} is, in the main, not new. It
contains the $23$ factors of \cite{HH}, members of the Gorshkov--Wirsing tower
in the form studied in \cite{H11}, and $45$ factors transported from Table 1
of \cite{F14}; $40$ of the latter carry positive weight at the optimum. Three
factors of $w$-degree $7$ that carry positive weight lie outside all three of
these sources. The transport is the content of the next proposition, which
also explains why the auxiliary functions of \cite{F14} are not a different
method.

\begin{proposition}\label{prop:transport}
Let $f(x)=\log(x+1)-\sum_j c_j\log|Q_j(x)|\ge m$ on $x>0$ with
$D:=\sum_j c_j\deg Q_j\le1$, an auxiliary function for the absolute length as
in \cite{F14}. Put $y=x/(x+1)$ and
$q_j(y):=(1-y)^{\deg Q_j}\,Q_j\bigl(y/(1-y)\bigr)\in\ZZ[y]$. Then
\[
  \sum_j c_j\log|q_j(y)|\ +\ (1-D)\log(1-y)\ \le\ -m
  \qquad\text{on }[0,1],
\]
and the total degree of the left-hand side is $D+(1-D)=1$. Thus every such
auxiliary function is an instance of the program \eqref{eq:lp}.
\end{proposition}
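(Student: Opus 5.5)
The plan is to substitute $x=y/(1-y)$, which is an increasing bijection from $[0,1)$ onto $[0,\infty)$, and rewrite each term of $f$ in the variable $y$. First, the integrality claim: writing $Q_j(x)=\sum_{k=0}^{d_j}a_kx^k$ with $d_j=\deg Q_j$, we get
\[
  q_j(y)=\sum_{k=0}^{d_j}a_k\,y^k(1-y)^{d_j-k}\in\ZZ[y],
\]
which has degree at most $d_j$. Second, since $x+1=1/(1-y)$, we have $\log(x+1)=-\log(1-y)$. Also
\[
  \log|Q_j(x)|=\log|q_j(y)|-d_j\log(1-y)
\]
for $y\in[0,1)$.

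Substituting these into the hypothesis $f(x)\ge m$ gives, for all $y\in(0,1)$,
\[
  -\log(1-y)-\sum_j c_j\log|q_j(y)|+D\log(1-y)\ \ge\ m .
\]
Rearranging, this is exactly
\[
  \sum_j c_j\log|q_j(y)|+(1-D)\log(1-y)\le -m .
\]
The endpoints need a separate word. At $y=0$, i.e. $x=0$, the inequality follows by continuity of the left-hand side (lower semicontinuity is enough, since $\log|q_j|$ may equal $-\infty$ there, which only helps). If one also wants the hypothesis at $x=0$ itself, note that $f$ is defined there unless some $Q_j(0)=0$, and in that case $f=+\infty$, which is harmless.

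At $y=1$, the left-hand side is upper semicontinuous with values in $[-\infty,\infty)$. So the inequality passes to the limit $y\to1^-$. If $D<1$, the term $(1-D)\log(1-y)$ equals $-\infty$ at $y=1$. If $D=1$, the term vanishes, and $\log|q_j(1)|=\log|a_{d_j}|$ is the limit from inside. Either way $\limsup_{y\to1^-}$ of the left-hand side is at least its value at $1$, so the bound $-m$ persists. This endpoint bookkeeping, together with the sign conventions for $\log 0=-\infty$, is the only delicate point.

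The degree accounting is then immediate. We may give $\deg q_j$ the nominal value $d_j$, in the homogenized sense used in the program. Then $\sum_j c_j d_j+(1-D)\cdot 1=1$, since $1-y$ is an integer polynomial of degree $1$ and $1-D\ge0$ is a legitimate weight. Hence the weights $(c_j,1-D)$ on the library $\{q_j\}\cup\{1-y\}$ are feasible for \eqref{eq:lp}.

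If $\deg q_j<d_j$, which happens when $Q_j(-1)=0$, the product would have actual degree below $N$. This only helps: a polynomial of smaller degree $N'\le N$ with sup-norm at most $e^{-mN}$ gives $\|P\|^{1/N'}\le e^{-m N/N'}$. Since $\|P\|\le 1$ is guaranteed when $-m\le0$, this is no worse than $e^{-m}$. Alternatively, we can pad the product with a factor $y^{0}$ and argue via the homogeneous count. I would state the conclusion with the homogeneous degree and remark on this.
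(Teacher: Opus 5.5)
Your proposal is correct and follows the paper's argument: substitute $x=y/(1-y)$, use $x+1=1/(1-y)$ and $|Q_j(x)|=|q_j(y)|\,(1-y)^{-\deg Q_j}$, and rearrange $f\ge m$. Two small remarks on your extra bookkeeping: at $y=1$ you need the value there to be at most $\limsup_{y\to1^-}$ of the left-hand side, which your case check ($D<1$ versus $D=1$) supplies, whereas upper semicontinuity points the other way; and your observation that $\deg q_j<\deg Q_j$ exactly when $Q_j(-1)=0$ correctly refines the paper's count $\sum_j c_j\deg q_j=D$, with the drop harmless since $\ZZ_n[x]$ consists of polynomials of degree at most $n$.
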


\begin{proof}
$x+1=1/(1-y)$ and $Q_j(x)=q_j(y)(1-y)^{-\deg Q_j}$, so
$f=\;-\log(1-y)-\sum_j c_j\log|q_j(y)|+D\log(1-y)$, and $f\ge m$ is the
displayed inequality. The map $x\mapsto y$ carries $(0,\infty)$ onto $(0,1)$.
The column $\log(1-y)$ has degree $1$ and weight $1-D$, and $\sum_j c_j\deg
q_j=D$.
\end{proof}

Under this transport, and after the symmetrisation of Lemma \ref{lem:sym}, the
reduction to reciprocal polynomials used in \cite{F14} becomes the reduction
$x\mapsto1-x$ used here. Re-optimising the weights over the transported
library of \cite{F14} alone already returns $0.422684708815$, below the value
$0.422685$ recorded there; the weights published in \cite{F14} are not claimed
by their author to be optimal. The gain of Theorem \ref{thm:upper} over
\eqref{eq:published} is therefore a gain in the weights and in the union of the
libraries, not in the underlying inequality.

\section{The lower bound}\label{sec:low}

\subsection{The certificate}
Let $\mu$ be a probability measure on $[0,1]$ with potential
$U_\mu(z)=\int\log|z-t|\,d\mu(t)$. Let $Q_1,\dots,Q_J\in\ZZ[x]$ be pairwise
distinct irreducible polynomials, of degrees $d_j$, leading coefficients
$a_j\ge1$, root measures $\nu_j$ (the normalised counting measure on the roots),
and put $L_j=\log(a_j)/d_j$. Let $c_j\ge0$ with
$B:=\sum_j c_jd_j\le1$. Define
\[
  A(z)\ :=\ U_\mu(z)-\sum_j c_j\log|Q_j(z)| ,
\]
\[
  G_i\ :=\ L_i\,(1-B+c_id_i)\ +\ \int U_\mu\,d\nu_i\ -\ \sum_{j\ne i}c_j\int\log|Q_j|\,d\nu_i .
\]

\begin{theorem}\label{thm:cert}
With the notation above,
\[
  \tZ([0,1])\ \ge\ \exp\Bigl(\min\Bigl\{\ \inf_{[0,1]}A,\ \ -\sum_j c_j\log a_j,\ \ \min_i G_i\ \Bigr\}\Bigr).
\]
\end{theorem}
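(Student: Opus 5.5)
We use the standard lower-bound argument for integer Chebyshev constants. Fix a nonzero $P\in\ZZ_n[x]$ and aim for
\[
\|P\|_{[0,1]}^{1/n}\ \ge\ \exp(m),\qquad m:=\min\Bigl\{\inf_{[0,1]}A,\ -\sum_j c_j\log a_j,\ \min_i G_i\Bigr\}.
\]
Since $\tZ$ is an infimum of such quantities, or a limit with $n\to\infty$, this gives the theorem.

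The first step factors $P$ as
\[
P=\pm\, b\prod_i Q_i^{k_i}\, S,
\]
where $S\in\ZZ[x]$ is coprime to every $Q_i$ and $b\in\ZZ$ collects the content. It is harmless to assume $P$ primitive, since $|b|\ge1$ only helps. We may also assume $\deg P=n$, because lower-degree polynomials only make the bound easier after normalising by $n$; one has to be careful here, and the cleanest route is to treat $\deg P=n'\le n$ and use $m\le 0$.

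The core inequality integrates $\log|P|$ against $\mu$. Since $\mu$ is a probability measure on $[0,1]$,
\[
\log\|P\|\ge\int\log|P|\,d\mu .
\]
Write $P=a\prod(x-\alpha)$ over its roots $\alpha$. Then
\[
\int\log|P|\,d\mu=\log a+\sum_\alpha U_\mu(\alpha).
\]
By the definition of $A$ we have $U_\mu=A+\sum_j c_j\log|Q_j|$. Summing over the roots gives
\[
\sum_\alpha U_\mu(\alpha)=\sum_\alpha A(\alpha)+\sum_j c_j\log\Bigl|\prod_\alpha Q_j(\alpha)\Bigr| ,
\]
and $\prod_\alpha Q_j(\alpha)=\Res(P,Q_j)/(a^{d_j}\cdot\text{sign})$ up to the leading coefficient. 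Take the roots to lie in $[0,1]$, or handle complex roots via $A$ on $\CC$. This last point needs care: the plan is to use that the $\inf$ over $[0,1]$ suffices because the extremal integer polynomials have all roots in $[0,1]$. Concretely, we would restrict to such $P$ using the fact that $\tZ$ is governed by them, or adapt by applying the argument to the part of $P$ with roots in $[0,1]$.

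We then split according to whether $Q_j$ divides $P$.

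For the factor $S$ coprime to all $Q_j$, the resultant $\Res(S,Q_j)$ is a nonzero integer. This gives
\[
\sum_{\alpha\text{ root of }S}\log|Q_j(\alpha)|\ \ge\ -\deg S\cdot\log a_j\cdot(\text{appropriate power}),
\]
more precisely $\ge -\deg(S)\,d_j L_j-\deg(Q_j)\log(\mathrm{lc}\,S)$. The contribution of $S$ is then
\[
\log \mathrm{lc}(S)\,(1-B)+\sum A-\deg S\sum_j c_j\log a_j ,
\]
and since $B\le1$ and $\mathrm{lc}(S)\ge1$ it is at least $\deg S\cdot\min(\inf A,\,-\sum_j c_j\log a_j)$. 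In this inequality $\inf A$ absorbs the $A$ terms and $-\sum_j c_j\log a_j$ absorbs the resultant loss; in fact each is bounded separately, and the sum is at least $\deg S\,(\inf A)$ plus the term in $\log a$, so the two-term minimum must be organised with some convexity bookkeeping.

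For each factor $Q_i^{k_i}$ we cannot use the resultant with $Q_i$ itself. Instead we evaluate directly: the roots of $Q_i$ contribute
\[
k_i\Bigl(d_i\int U_\mu\,d\nu_i+\log a_i\Bigr),
\]
and we use the coprime resultant bound only against $Q_j$ with $j\neq i$. This reproduces $d_i k_i\,G_i$. The term $L_i(1-B+c_id_i)$ arises from distributing the leading coefficient $a_i^{k_i}$ between the $\log a$ term of $\int\log|P|\,d\mu$ and the resultant losses $-c_j d_j\log a_i$ for $j\ne i$, since $1-\sum_{j\ne i}c_jd_j=1-B+c_id_i$.

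Adding the contributions with weights $\deg S$ and $k_id_i$, which sum to $n$, gives $\log\|P\|\ge n\,m$.

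The main obstacle is the bookkeeping of leading coefficients. We must check that $\log\mathrm{lc}(P)=\log\mathrm{lc}(S)+\sum_i k_i\log a_i$ is split so that each piece pays exactly its resultant losses with nonnegative leftover, and we must justify restricting roots to $[0,1]$ where $\inf A$ is taken. For the latter we would first try to show that complex or outside roots only increase $\int\log|P|\,d\mu$ relative to the bound, or else reduce to polynomials with all roots in $[0,1]$.
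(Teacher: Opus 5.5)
Your skeleton is the paper's: integrate $\log|P|$ against $\mu$, use $|\Res|\ge1$ for coprime pairs, and evaluate the library factors directly. That last step gives $k_id_iH_i\ge k_id_iG_i$ with $H_i=L_i+\int U_\mu\,d\nu_i$, and it is fine. The coprime part $S$, however, does not close, for two related reasons.

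First, your resultant inequality is wrong. From $\Res(S,Q_j)=\mathrm{lc}(S)^{d_j}\prod_{S(\alpha)=0}Q_j(\alpha)$ one gets $\sum_{S(\alpha)=0}\log|Q_j(\alpha)|\ge-d_j\log\mathrm{lc}(S)$, with no term $-\deg S\cdot\log a_j$. Your spurious term makes the contribution of $S$ equal to $\deg S\,\bigl(\inf A-\sum_jc_j\log a_j\bigr)$. That is a \emph{sum} of the two quantities, not their minimum. In the relevant regime, where both are negative, it is strictly smaller than the minimum, and no convexity bookkeeping repairs it. With the shipped certificate's numbers it would give about $e^{-1.55}\approx0.21$, worse than $1/4$. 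With the correct inequality the contribution of $S$ is $(1-B)\log\mathrm{lc}(S)+\sum_{S(\alpha)=0}A(\alpha)\ge\sum_{S(\alpha)=0}A(\alpha)$, and $\log a_j$ does not enter at all.

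Second, you must bound $A(\alpha)$ at roots of $S$ lying off $[0,1]$, where $\inf_{[0,1]}A$ says nothing; this is where $-\sum_jc_j\log a_j$ actually comes from. Neither of your workarounds is available. You may not assume that near-extremal polynomials have all roots in $[0,1]$, since the inequality must hold for every $P$. Nor is ``the part of $P$ with roots in $[0,1]$'' an integer polynomial: an irreducible factor can have roots on both sides. And you cannot discard a cofactor $G$ in $\log\|P\|\ge\int\log|P|\,d\mu$, because $\int\log|G|\,d\mu$ may be very negative.

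The missing idea is the minimum principle. $A$ is superharmonic on the Riemann sphere minus $[0,1]$: it is harmonic except at the zeros of the $Q_j$, where it equals $+\infty$, and near $\infty$ it behaves like $(1-B)\log|z|-\sum_jc_j\log a_j+o(1)$. Hence $\inf_{\CC}A\ge\min\{\inf_{[0,1]}A,\,-\sum_jc_j\log a_j\}$, so every root of $S$, wherever it lies, contributes at least $m$. Your degree-drop concern is then harmless, since $-\sum_jc_j\log a_j\le0$ forces $m\le0$.
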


\begin{proof}
Let $P\in\ZZ[x]$ have degree $n$, root measure $\nu$, leading coefficient of
modulus $e^{nL}$, and $M=\|P\|_{[0,1]}$. Since $\mu$ is a probability measure,
\[
  \log M\ \ge\ \int\log|P|\,d\mu\ =\ n\Bigl(L+\int U_\mu\,d\nu\Bigr)
\]
by Fubini. Write $P=\prod_i (Q^{(i)})^{e_i}$ over its distinct irreducible
factors, $\nu=\sum_i m_i\nu^{(i)}$ with $m_i\ge0$, $\sum_i m_i=1$, and
$L=\sum_i m_iL^{(i)}$. Then $L+\int U_\mu d\nu=\sum_i m_iH_i$ with
$H_i:=L^{(i)}+\int U_\mu\,d\nu^{(i)}$, so it suffices to bound each $H_i$ from
below.

For every library index $j$ with $Q_j\ne Q^{(i)}$ the resultant
$\Res(Q^{(i)},Q_j)$ is a nonzero integer, so $|\Res(Q^{(i)},Q_j)|\ge1$ and,
taking logarithms and normalising,
\begin{equation}\label{eq:res}
  d_j\,L^{(i)}+\int\log|Q_j|\,d\nu^{(i)}\ \ge\ 0 .
\end{equation}
Subtracting \eqref{eq:res} with the weights $c_j\ge0$ from $H_i$,
\[
  H_i\ \ge\ L^{(i)}\Bigl[1-\!\!\sum_{j:\,Q_j\ne Q^{(i)}}\!\!c_jd_j\Bigr]
  +\int\Bigl[U_\mu-\!\!\sum_{j:\,Q_j\ne Q^{(i)}}\!\!c_j\log|Q_j|\Bigr]d\nu^{(i)} .
\]
Two cases occur, and they are exhaustive.

If $Q^{(i)}$ is not in the library, both sums run over all $j$: the bracket is
$A$, the coefficient is $1-B\ge0$, and $L^{(i)}\ge0$ because $a^{(i)}\ge1$;
hence $H_i\ge\inf_\CC A$.

If $Q^{(i)}=Q_i$ lies in the library, the term $j=i$ must be omitted, since
$\Res(Q_i,Q_i)=0$ and \eqref{eq:res} is unavailable for it. The coefficient
becomes $1-B+c_id_i$ and the bracket becomes $A+c_i\log|Q_i|$; the resulting
lower bound is exactly $G_i$.

Finally $A$ is harmonic on the Riemann sphere minus $[0,1]$ and the zeros of
the $Q_j$, and tends to $+\infty$ at those zeros, so its infimum over $\CC$ is
attained on $[0,1]$ or at infinity, and $A(\infty)=-\sum_j c_j\log a_j$.
Taking the minimum of the three quantities and exponentiating gives the claim.
\end{proof}

\begin{remark}
The second case is the whole difficulty. A single polynomial with all roots in
$[0,1]$ gives no lower bound whatsoever, because the near-extremal $P$ are
divisible by precisely those polynomials; this is why the classical
Gorshkov--Wirsing bound is an infimum over an infinite family of pairwise
coprime members. The terms $G_i$ are what pays for divisibility. They are
affine in $(\mu,c)$ jointly, so Theorem \ref{thm:cert} turns the search for the
best certificate into a single semi-infinite linear program in $(\mu,c)$.
\end{remark}

\subsection{Proving $\inf A$}
In the certificate $\mu$ is piecewise constant on finitely many cells, so
$U_\mu=\sum_k\mu_k\psi_k$ with
$\psi_k(x)=\frac1{h_k}\int_{\text{cell}_k}\log|x-t|\,dt$, $h_k$ the cell width.
The following remark makes the bisection exact on the potential part.

\begin{lemma}\label{lem:uni}
$\psi_k$ is unimodal on $\RR$ with minimum exactly at the midpoint of the
cell. Consequently its minimum over an interval $[t-H,t+H]$ is attained at the
point of that interval closest to the midpoint.
\end{lemma}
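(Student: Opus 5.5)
Translate the cell so that it is $[m-h/2,\,m+h/2]$ with midpoint $m$ and width $h=h_k$, and put $s=x-m$. Then
\[
  \psi_k(x)\ =\ \frac1h\int_{-h/2}^{h/2}\log|s-u|\,du .
\]
The plan is to avoid differentiating under the integral, since the integrand is singular inside the cell. Instead I will compute $\psi_k$ in closed form and read off monotonicity from an elementary derivative.

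\textbf{Closed form.} With $F(v):=v\log|v|-v$ and $F(0):=0$, $F$ is continuous on $\RR$ and $F'(v)=\log|v|$ for $v\ne0$. Integrating in $u$ gives
\[
  \psi_k(x)\ =\ \frac1h\Bigl(F\bigl(s+\tfrac h2\bigr)-F\bigl(s-\tfrac h2\bigr)\Bigr).
\]
This holds for every real $s$, including $s$ inside the cell, because the improper integral of $\log|v|$ converges and $F$ is a continuous antiderivative across $0$. In particular $\psi_k$ is continuous on $\RR$. Since $F$ is odd, $\psi_k$ is even in $s$, i.e.\ symmetric about the midpoint.

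\textbf{Monotonicity.} For $s\notin\{\pm h/2\}$,
\[
  \psi_k'(x)\ =\ \frac1h\log\frac{|s+h/2|}{|s-h/2|} .
\]
For $s>0$ we have $|s+h/2|>|s-h/2|$, so this derivative is strictly positive. For $s<0$ it is strictly negative. The two exceptional points $s=\pm h/2$ are handled by continuity: a continuous function whose derivative is positive on each of finitely many open subintervals of $(0,\infty)$ is strictly increasing on $[0,\infty)$, by the mean value theorem applied piecewise. Hence $\psi_k$ is strictly decreasing on $(-\infty,m]$ and strictly increasing on $[m,\infty)$. So it is unimodal with its unique minimum exactly at the midpoint $x=m$.

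\textbf{Consequence.} Let $J=[t-H,t+H]$.
\begin{itemize}
  \item If $m\in J$, the global minimiser lies in $J$, and it is the point of $J$ closest to $m$.
  \item If $m<t-H$, then $J\subset[m,\infty)$, where $\psi_k$ is increasing, so the minimum is at $t-H$. This is again the point of $J$ closest to $m$.
  \item If $m>t+H$, the case is symmetric and the minimum is at $t+H$.
\end{itemize}
The only delicate step is justifying the closed form and the continuity of $\psi_k$ through the endpoints $s=\pm h/2$, where $\psi_k'$ is infinite. The continuity of $F$ at $0$ settles it, so I expect no real obstacle.
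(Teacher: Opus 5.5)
Your proposal is correct and follows essentially the paper's route. The paper simply differentiates, getting $\psi_k'(x)=\bigl(\log|x-a|-\log|x-b|\bigr)/h_k$ on a cell $[a,b]$, and reads off the sign change at the midpoint. Your closed form via $F(v)=v\log|v|-v$ and the piecewise mean-value argument at $x=a,b$ make rigorous the two points where that derivative blows up, which the paper passes over, and your three-case check spells out the ``consequently'' clause.
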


\begin{proof}
For a cell $[a,b]$ one has $\psi_k'(x)=\bigl(\log|x-a|-\log|x-b|\bigr)/h_k$,
which vanishes only at $x=(a+b)/2$ and changes sign there.
\end{proof}

\begin{proof}[Proof of Theorem \ref{thm:lower}]
The certificate consists of a measure $\mu$ carried by $947$ cells with
explicit edges and a library of $J=206$ polynomials. Lower bounds for $A$ on
subintervals combine Lemma \ref{lem:uni}, which is lossless, with Lemma
\ref{lem:mono} applied to each $\log|Q_j|$ with its measured residual, the
largest being $4.501\cdot10^{-44}$; a rounding allowance $10^{-10}$ is
subtracted throughout. Bisection over $12\,326\,733$ intervals in $33$ levels
gives
\[
  \inf_{[0,1]}A\ \ge\ V:=-0.862208405409 .
\]
The two remaining quantities of Theorem \ref{thm:cert} are checked directly on
the shipped data: $A(\infty)=-0.688183611561\ge V$ and
$\min_iG_i=-0.862208282175\ge V$, the latter with margin $1.23\cdot10^{-7}$.
The budget is $B=0.999999999\le1$. Hence
$\tZ([0,1])\ge e^{V}=0.422228600019$.
\end{proof}

\subsection{Calibration}
With an empty library the program returns $0.2499987806$, against the exact
value $t_{\CC}([0,1])=1/4$ of the ordinary transfinite diameter. With the library $\{x,\,x-1\}$ it returns
$0.4142098647$, converging to $\sqrt2-1=0.4142135624$, a closed form that
checks the machinery for free. A $70$-polynomial library gives $0.4221778541$
and the $206$-polynomial library gives $0.4221882685$ on $800$ Chebyshev-graded
cells, $0.4222142307$ on $1600$ such cells, and $0.4222286000$ on the $947$
adapted cells of the shipped certificate. Restricting $\mu$ to piecewise
constant functions can only weaken the certificate, never invalidate it.

\section{A ceiling for the method}\label{sec:ceiling}

It is natural to ask whether several measures used simultaneously give a better
bound than one. They do not.

\begin{proposition}\label{prop:ceiling}
Let $\mu_1,\dots,\mu_r$ be probability measures on $[0,1]$ and, for an
irreducible $Q$ with root measure $\nu$ and $L=\log(\mathrm{lead}\,Q)/\deg Q$,
put $H_t(Q)=L+\int U_{\mu_t}d\nu$. Then
\[
  \min_{m\in\Delta}\ \max_{1\le t\le r}\ \sum_i m_i\,H_t(Q^{(i)})
  \ =\ \max_{\lambda\in\Delta}\ \min_i\ H_{\mu_\lambda}(Q^{(i)}) ,
  \qquad \mu_\lambda:=\sum_t\lambda_t\mu_t ,
\]
and $\mu_\lambda$ is again a single admissible probability measure. Hence the
bound obtained from the family $\{\mu_1,\dots,\mu_r\}$ is already attained by
one measure.
\end{proposition}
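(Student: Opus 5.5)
This is a finite bilinear min-max identity, and I would prove it with the von Neumann minimax theorem. Fix the finite collection of irreducibles $Q^{(1)},\dots,Q^{(s)}$ over which $m$ ranges; $\Delta$ denotes the corresponding simplex in $\RR^s$ on the left and in $\RR^r$ on the right. Form the real $s\times r$ matrix $M_{it}:=H_t(Q^{(i)})$. Each entry is finite, because $\int U_{\mu_t}\,d\nu$ is a finite average of potentials at finitely many algebraic points. If a root of $Q^{(i)}$ carries an atom of $\mu_t$, the entry is $-\infty$. I would either exclude that case, which is harmless since the measures used are absolutely continuous, or handle it by allowing extended values and noting that the argument below still goes through with $-\infty$ entries.

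The key step is linearity in $\mu$. The map $\mu\mapsto U_\mu(z)$ is linear, so
\[
  H_{\mu_\lambda}(Q^{(i)})=L^{(i)}+\int U_{\mu_\lambda}\,d\nu^{(i)}=\sum_t\lambda_t H_t(Q^{(i)}),
\]
using $\sum_t\lambda_t=1$ to distribute the constant $L^{(i)}$. Two elementary reductions then follow. First, a linear function on a simplex attains its maximum at a vertex, so
\[
  \max_t\sum_i m_iM_{it}=\max_{\lambda\in\Delta}m^{\top}M\lambda .
\]
Second, a linear function on a simplex attains its minimum at a vertex, so
\[
  \min_i H_{\mu_\lambda}(Q^{(i)})=\min_i (M\lambda)_i=\min_{m\in\Delta}m^{\top}M\lambda .
\]
With these, the two sides of the claimed identity become
\[
  \min_m\max_\lambda m^{\top}M\lambda \quad\text{and}\quad \max_\lambda\min_m m^{\top}M\lambda .
\]
These are equal by von Neumann's minimax theorem, since the function is bilinear and both simplices are compact and convex.

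The admissibility claim is immediate. The measure $\mu_\lambda$ is a convex combination of probability measures on $[0,1]$, so it is again one. If the $\mu_t$ are piecewise constant, $\mu_\lambda$ is piecewise constant on the common refinement of their cells. Consequently the lower bound that the family $\{\mu_t\}$ yields through the first part of the proof of Theorem~\ref{thm:cert}, namely
\[
  \log M/n\ge\max_t\sum_i m_iH_t\ge\min_m\max_t\cdots ,
\]
is already produced by the single measure $\mu_\lambda$ at the optimal $\lambda$.

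The only real obstacle is the setting: the family of irreducibles is infinite in the actual application. I would state and prove the identity for each finite set of factors, which is all the proposition asserts. For the ``hence'' sentence I would then argue that the infimum over all $P$ of the left side cannot beat what a single $\mu_\lambda$ gives. Here the potential issue is that the optimal $\lambda$ depends on the finite set. If an argument uniform in the set is needed, I would try Sion's minimax theorem, taking $\lambda$ in the compact simplex $\Delta_r$ and $m$ ranging over finitely supported probability vectors on all irreducibles, a convex but noncompact set. Sion's theorem requires compactness on only one side and needs the payoff to be linear, hence convex and concave, in each variable. It also needs continuity in $\lambda$, which holds since the payoff is linear in finitely many coordinates. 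That would give
\[
  \inf_m\max_\lambda=\max_\lambda\inf_m ,
\]
with the maximum attained by compactness, and hence a single $\mu_\lambda$ that works for all $P$ simultaneously.
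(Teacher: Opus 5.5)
Your proof is the paper's: affinity of $H$ in $\mu$ gives $\sum_t\lambda_tH_t(Q)=H_{\mu_\lambda}(Q)$, and von Neumann's minimax theorem for the matrix game $\bigl(H_t(Q^{(i)})\bigr)_{i,t}$ gives the identity; your vertex reductions only spell out what the paper leaves implicit. Your extra argument goes beyond the paper: using Sion's theorem, with compactness on the $\lambda$-side (or a finite-intersection argument on $\Delta_r$), you obtain a single $\lambda$ that works for all irreducibles at once, which is exactly what the paper's ``hence'' passes over.
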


\begin{proof}
$H_t(Q)$ is affine in $\mu_t$, so $\sum_t\lambda_tH_t(Q)=H_{\mu_\lambda}(Q)$.
The left-hand side is the value of the matrix game with payoff
$\bigl(H_t(Q^{(i)})\bigr)_{i,t}$, and von Neumann's minimax theorem identifies
it with $\max_\lambda\min_i\sum_t\lambda_tH_t(Q^{(i)})$, which is
$\max_\lambda\min_iH_{\mu_\lambda}(Q^{(i)})$. Since $\Delta$ is the simplex,
$\mu_\lambda$ is a probability measure on $[0,1]$.
\end{proof}

\section{Verification}\label{sec:verif}

Two self-contained scripts are included as ancillary files. Each reads only its
certificate and needs only \texttt{numpy} and \texttt{mpmath}, together with
\texttt{sympy} for the irreducibility check of Section \ref{sec:verif}.

\texttt{verify\_integer\_Chebyshev.py} re-derives Theorem \ref{thm:upper}: it
recomputes the roots of the $92$ factors, measures each residual $E_j$
coefficient-wise, and runs the bisection of Section \ref{sec:up}
($267\,245$ intervals, $28$ levels, about one second).

\texttt{verify\_integer\_Chebyshev\_LOWER.py} re-derives Theorem
\ref{thm:lower}. Besides the bisection ($12\,326\,733$ intervals, $33$ levels),
it checks the hypotheses of Theorem \ref{thm:cert} on the shipped data rather
than assuming them: that the $206$ library polynomials are pairwise distinct,
irreducible over $\ZZ$ and primitive --- without which \eqref{eq:res} may fail
--- that the cell edges are strictly increasing and contained in $[0,1]$, and
that $B\le1$ holds as an inequality on the floating-point weights actually
shipped. On the near-extremal integer polynomial of Section \ref{sec:up} the
whole chain of Theorem \ref{thm:cert} was also checked numerically, the two
inequalities holding with margins $6.8\cdot10^{-4}$ and $1.0\cdot10^{-3}$.

Two independent routes were used where a single one would have left a gap: the
reduced costs of the upper-bound program were computed both by exact integer
Horner evaluation at dyadic rationals and by the product over $60$-digit roots,
agreeing to $10^{-14}$; and for the line attaining $\min_iG_i$ the quantities
$\log|\Res(Q_i,Q_j)|$ were recomputed exactly in integers and compared with the
route through the roots, agreeing to $4.0\cdot10^{-15}$, with no vanishing
resultant.

\section{Scope}\label{sec:scope}

The program \eqref{eq:lp} runs over products of a finite library. Its value is
an upper bound for $\tZ([0,1])$, but the integer Chebyshev polynomials of
$[0,1]$ have infinitely many distinct factors \cite{P05}, so no finite library
is optimal and Theorem \ref{thm:upper} is not claimed to be sharp for the
method.

Saturation is not claimed on either side. In particular the value of Theorem
\ref{thm:lower} still improved when the library was enlarged from $70$ to
$206$ polynomials, and more polynomials should give more.

The measure $\mu$ of Theorem \ref{thm:lower} is piecewise constant; this
restricts the search space and can only weaken the certificate.

Finally, the certificates are what is being asserted. Both are shipped, both
are machine-checkable in minutes, and any error in them is detectable by the
reader without reference to the text.

\section*{Acknowledgements}

The computations reported here were carried out in collaboration with Claude
(Anthropic). The library of Section \ref{sec:up} rests on the published work of
Hare and Hodges \cite{HH}, of Hare \cite{H11} and, for the greater part of its
active columns, of Flammang \cite{F14}; the lower bound is measured against
Pritsker \cite{P05}.

\end{document}